\newtheorem{thm}{Theorem}
\newtheorem{lemma}{Lemma}
\newtheorem{pro}{Proposition}
\numberwithin{equation}{section} \setcounter{tocdepth}{1}
\newcommand{\bea}{\begin{eqnarray}}
\newcommand{\eea}{\end{eqnarray}}
\begin{document}
\title[Dynamics of a population]{Dynamics of dioecious population with different fitness of genotypes}

\author{A. M. Diyorov, U. A. Rozikov}

\address{A. \ M. \ Diyorov \\ The Samarkand branch of TUIT, Samarkand, Uzbekistan.}
\email {dabduqahhor@mail.ru}

 \address{U.\ A.\ Rozikov\\ V.I.Romanovskiy Institute of mathematics,
81, Mirzo Ulug'bek str., 100170, Tashkent, Uzbekistan.}
\email {rozikovu@yandex.ru}

\begin{abstract} In this paper we study dynamical systems generated by an evolution operator
 of a dioecious population. This evolution operator is a six-parametric,
non-linear operator mapping $[0,1]^2$ to itself. We find all fixed points and
under some conditions on parameters we give limit points of trajectories constructed
by iterations of the evolution operator.
\end{abstract}
\maketitle

{\bf Mathematics Subject Classifications (2010).} 37N25, 92D10.

{\bf{Key words.}} allele, genotype, dynamical system, fixed point, trajectory, limit point.

\section{Introduction} \label{sec:intro}
In biology a {\it gene} is the molecular unit of heredity of a living organism.
 An {\it allele} is one of a number of alternative forms of the same gene.
A {\it chromosome} contains the genetic code of a gene. {\it Gamete} is a cell that fuses with
another cell during fertilization in organisms that sexually reproduce. A {\it zygote} is a cell
formed by a fertilization event between two gametes.

A {\it dioecy}\footnote{see https://en.wikipedia.org/wiki/Dioecy \ \ and references therein}
is a characteristic of a species, meaning that it has distinct male and female
individual organisms. {\it Dioecious} reproduction is biparental reproduction.
Dioecy is a method that excludes self-fertilization and promotes allogamy (outcrossing),
and thus tends to reduce the expression of recessive deleterious mutations present in a population \cite{C}.

In zoology, dioecious species may be opposed to hermaphroditic species, meaning that an individual is of only
one sex, in which case the synonym gonochory is more often used.
Dioecy may also describe colonies within a species, which may be either dioecious
or monoecious \cite{D}. An individual dioecious colony contains members of only
one sex, whereas monoecious colonies contain members of both sexes.
Most animal species are dioecious.

Following \cite[page 45]{E} we consider a population which admits two sexes.
Study a given gene locus at which two alleles $A_1$ and $A_2$ may occur.
Let $A_1A_1$, $A_1A_2$, and $A_2A_2$ be genotypes of the populations.
Assume that viability selection exists, so that the relative fitness of the genotype
$A_iA_j$ in males is $w_{ij}$, with corresponding values $v_{ij}$ in females.
Consider genotypic frequencies immediately after the formation of the zygotes of any
generation, and suppose that in a given generation the males produce $A_1$ gametes
with frequency $x$ and $A_2$ gametes with frequency $1-x$. Denote the corresponding
frequencies for females by $y$ and $1-y$. At the time of conception of the
zygotes in the daughter generation the genotypic frequencies are, in both sexes,
$$\begin{array}{cccc}
 A_1A_1 & A_1A_2 &  A_2A_2\\[2mm]
 xy & x(1-y)+y(1-x) &  (1-x)(1-y)
\end{array}$$
By the age of maturity these frequencies will have been altered  by
differential viability to the relative values
\begin{equation}\label{bp}
\begin{array}{ccccc}
\ \ \ \ \ \ & A_1A_1 & A_1A_2 & A_2A_2\\[2mm]
{\rm males}:  & w_{11}xy & w_{12}[x(1-y)+y(1-x)] &  w_{22}(1-x)(1-y)\\[2mm]
{\rm females}:  & v_{11}xy & v_{12}[x(1-y)+y(1-x)] &  v_{22}(1-x)(1-y)
\end{array}
\end{equation}

In this paper we consider the population (\ref{bp}).

To define an evolution operator of this population let us give necessary definitions.

The following set is called $(m-1)$-dimensional simplex:
$$ S^{m-1}=\{x=(x_1,...,x_m)\in R^m: x_i\geq 0,
\sum^m_{i=1}x_i=1 \}.$$

A state of population (\ref{bp}) is a pair of probability distributions
$x=(x_1,x_2)\in S^1, \, y=(y_1,y_2)\in S^1$ on the set $\{A_1, A_2\}$.

Denote $S=\{(x,y): 0\leq x\leq 1, 0\leq y\leq 1\}$.

The operator  $W:S\rightarrow S$, corresponding to population (\ref{bp}), is defined by
\begin{equation}\label{w}
\begin{array}{ll}
x'={axy+ b(x(1-y)+y(1-x))\over axy+b(x(1-y)+y(1-x))+c(1-x)(1-y)},\\[3mm]
y'={\alpha xy+ \beta (x(1-y)+y(1-x))\over \alpha xy+\beta (x(1-y)+y(1-x))+\gamma (1-x)(1-y)},
\end{array}
\end{equation}
where $x'$ and $y'$ are the frequencies of $A_1$ gametes produced by males and
females of the daughter generation. Here $a=w_{11}$, $b=w_{12}$, $c=w_{22}$, $\alpha=v_{11}$, $\beta=v_{12}$, and $\gamma=v_{22}$.

This operator is called an evolution operator (\cite{E}, \cite{L}),
where
$$x=x_1, 1-x=x_2, \, y=y_1, 1-y=y_2,$$
$$a,b,c,\alpha,\beta,\gamma\geq 0, a+b\ne 0, \alpha+\beta\ne 0.$$

The operator (\ref{w}) for any initial point (state)  $(x^{(0)},y^{(0)})\in S$ defines its trajectory:
$$
\{(x^{(n)},y^{(n)})\}_{n=0}^\infty :
(x^{(n+1)},y^{(n+1)})=W((x^{(n)},y^{(n)}))=W^{(n+1)}((x^{(0)},y^{(0)})),
n=0,1,2,...$$
i.e.
\begin{equation}\label{wt}
\begin{array}{ll}
x^{(n+1)}={ax^{(n)}y^{(n)}+ b(x^{(n)}(1-y^{(n)})+y^{(n)}(1-x^{(n)}))\over ax^{(n)}y^{(n)}+b(x^{(n)}(1-y^{(n)})+y^{(n)}(1-x^{(n)}))+c(1-x^{(n)})(1-y^{(n)})},\\[3mm]
y^{(n+1)}={\alpha x^{(n)}y^{(n)}+ \beta (x^{(n)}(1-y^{(n)})+
y^{(n)}(1-x^{(n)}))\over \alpha x^{(n)}y^{(n)}+\beta (x^{(n)}(1-y^{(n)})+y^{(n)}(1-x^{(n)}))+\gamma (1-x^{(n)})(1-y^{(n)})}.
\end{array}
\end{equation}

The main problem for a given operator $W$ is to investigate the trajectory $W^{(n)}((x^{(0)},y^{(0)}))$, for any initial point
$(x^0,y^0)$. The difficulty of the problem depends on the given operator $W$. In this paper we study
trajectories given by the operator (\ref{w}).

\section{Analysis of the trajectories}
\subsection{Fixed points.} In this subsection we give all fixed points of $W$. Such points are solutions to
$W((x,y))=(x,y)$, i.e.,
\begin{equation}\label{wfp}
\begin{array}{ll}
x={axy+ b(x(1-y)+y(1-x))\over axy+b(x(1-y)+y(1-x))+c(1-x)(1-y)},\\[3mm]
y={\alpha xy+ \beta (x(1-y)+y(1-x))\over \alpha xy+\beta (x(1-y)+y(1-x))+\gamma (1-x)(1-y)},
\end{array}
\end{equation}
with condition that $x,y\in [0,1]$.

From the first equation of the system (\ref{wfp}) we get
$$(x-1)[\{(a-2b+c)x+b\}y+(b-c)x]=0,$$
hence
$$x=1, \ \ y={(c-b)x\over (a-2b+c)x+b}.$$
Substituting $x=1$ into second equation we get
$y=1$ (for $\alpha\beta>0$ and $\alpha=0$, $\beta\ne 0$) and $y=0$
(for $\alpha\ne 0$, $\beta=0$).
Now substituting  $y={(c-b)x\over (a-2b+c)x+b}$ into second equation
and solving it with respect to $x$, we get the following three solutions:
$$x=0, \ \ x={b\over b-a}, \ \ x={(c-b)\gamma-c\beta\over  (c-b)(\alpha+\gamma)+ (a-c)\beta}.$$
Consequently (\ref{wfp}) has the following solutions
\begin{equation}\label{5}
\begin{array}{lll}
z_0=(0,0), \ \ z_1=\left({b\over b-a},1\right),\\[3mm]
 z_2=\left({(c-b)\gamma-c\beta\over  (c-b)(\alpha+\gamma)+ (a-c)\beta},
 {(\gamma-\beta)c-\gamma b\over  (\gamma-\beta)(a+c)+ (\alpha-\gamma)b}\right),\\[3mm]
  z_3=(1,1), \ \ z_4=(1, {\beta\over \beta-\alpha}).
 \end{array}
 \end{equation}
Note that, for all $a,b>0$,  ${b\over b-a}\notin [0,1]$  i.e.
$z_1\notin S$. Moreover, for $a=0, b\ne 0$ (resp. $b=0, a\ne 0$) we have
 $z_1=(1,1)\in S$ (resp.  $z_1=(0,1)\in S$). Similar conclusions true for $z_4$.

Consider the set of parameters defined by
$$\mathcal P=\{(a,b,c,\alpha,\beta,\gamma)\in \mathbb R^6_+: z_2\in S\}.$$
Since $\{(a,b,c,\alpha,\beta,\gamma)\in \mathbb R^6_+: a=\alpha, b=\beta, c=\gamma, c>2b\}$ is a subset for
$\mathcal P$, we have $\mathcal P\ne\emptyset$.

Summarize the above obtained results  in the following:

\begin{thm} For the operator (\ref{w}) the following hold

\begin{itemize}
\item[1)] If $ab\ne 0$, $\alpha\beta\ne 0$, $(a,b,c,\alpha,\beta,\gamma)\notin \mathcal P$, then  the operator (\ref{w})
has two fixed point $z_0$, $z_3$.

\item[2)]  If $ab=0$, $\alpha\beta\ne 0$,  or  $(a,b,c,\alpha,\beta,\gamma)\in \mathcal P$, then  the operator (\ref{w})
has up to five fixed points $z_i$, $i=0,1,2,3,4.$
\end{itemize}
\end{thm}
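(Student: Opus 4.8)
The plan is to reduce the theorem to a membership count. The algebraic reduction carried out just above the statement shows that the solution set of the fixed-point system \eqref{wfp}, viewed without the constraint $x,y\in[0,1]$, is exactly the list $\{z_0,z_1,z_2,z_3,z_4\}$ recorded in \eqref{5}. Hence the theorem is not a new computation but a bookkeeping step: it remains only to decide, under each set of hypotheses, which of these five candidates actually lie in the domain $S=[0,1]^2$, since the number of fixed points of $W$ equals the number of surviving candidates. First I would dispose of the two universal fixed points. Both $z_0=(0,0)$ and $z_3=(1,1)$ are corners of $S$, and a direct substitution into \eqref{w} confirms $W(z_0)=z_0$ and $W(z_3)=z_3$; these account for the two fixed points present in every case.

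Next I would analyze $z_1=\left(\tfrac{b}{b-a},1\right)$ and, symmetrically, $z_4=\left(1,\tfrac{\beta}{\beta-\alpha}\right)$. For $z_1$ the entire issue is whether $\tfrac{b}{b-a}\in[0,1]$. When $ab\ne 0$, so that $a,b>0$, I split into the subcases $a<b$ (whence $\tfrac{b}{b-a}>1$), $a>b$ (whence $\tfrac{b}{b-a}<0$), and $a=b$ (whence no finite root of this form arises), concluding $z_1\notin S$. When instead $ab=0$, a direct evaluation gives $z_1=(1,1)$ if $a=0$ and $z_1=(0,1)$ if $b=0$, both lying in $S$ (the former coinciding with $z_3$). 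The identical argument with $(\alpha,\beta)$ in place of $(a,b)$ settles $z_4$. Finally, by the very definition of $\mathcal P$ one has $z_2\in S$ precisely when $(a,b,c,\alpha,\beta,\gamma)\in\mathcal P$, and $\mathcal P\ne\emptyset$ as already noted.

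Assembling the cases then proves both assertions. Under the hypotheses of 1), namely $ab\ne 0$, $\alpha\beta\ne 0$ and $(a,b,c,\alpha,\beta,\gamma)\notin\mathcal P$, all three of $z_1,z_2,z_4$ fall outside $S$ by the paragraph above, leaving exactly $z_0$ and $z_3$. Under the hypotheses of 2), either $ab=0$ forces $z_1\in S$, or membership in $\mathcal P$ forces $z_2\in S$, so at least one further candidate re-enters $S$; together with the possible survival of $z_4$ this yields up to five fixed points, the phrase ``up to'' absorbing the coincidences (for instance $z_1=z_3$ when $a=0$) and the degenerate subcases.

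The main obstacle is not any single sign check but the exhaustiveness of the boundary analysis, i.e. guaranteeing that the list \eqref{5} is genuinely complete. This requires verifying that substituting $x=1$ into the second equation of \eqref{wfp} recovers all admissible $y$ across the three regimes $\alpha\beta>0$, $\alpha=0\ne\beta$, and $\alpha\ne0=\beta$, and that no root is lost when an intermediate denominator, such as $(a-2b+c)x+b$ or the denominator defining $z_2$, vanishes. Treating these degeneracies cleanly, rather than the routine interval computations, is the delicate part of the argument.
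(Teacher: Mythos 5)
Your proposal is correct and follows essentially the same route as the paper: the paper's ``proof'' is precisely the pre-theorem computation that lists the candidate solutions $z_0,\dots,z_4$ of \eqref{wfp} and then decides membership in $S$ case by case (the sign analysis of $b/(b-a)$, the degenerate evaluations at $a=0$ or $b=0$, and the definition of $\mathcal P$ as the locus where $z_2\in S$), after which the theorem merely summarizes. Your closing remark about verifying completeness of the root list across the degenerate regimes is a fair observation, but it does not alter the argument, which coincides with the paper's.
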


\subsection{A symmetric case.}
To simplify the problem of investigation of trajectories for (\ref{w})
we consider the case $a=\alpha>0, b=\beta>0, c=\gamma$. In this case we fully
describe the limit points of each trajectory.

Then the restriction of the operator $W$ on the
invariant set
$$\mathcal M=\{(x,y)\in S: \ x=y\}$$ has the form
\begin{equation}\label{fx}
x^\prime=f(x)\equiv {ax^2+ 2bx(1-x)\over ax^2+2bx(1-x)+c(1-x)^2}.
\end{equation}
This function $f(x)$ has the following fixed points:
$$0, \ \ 1,  \ \ x^*={c-2b\over a-2b+c}.$$
Note that $x^*\in (0,1)$ iff  $c> 2b$.

Denote
$$A(x, y)={axy+ b(x(1-y)+y(1-x))\over axy+b(x(1-y)+y(1-x))+c(1-x)(1-y)}.$$

\begin{thm} If $a=\alpha>0$, $b=\beta>0$, $c=\gamma$ in (\ref{w}) and $(x^0,y^0)\in S$  is an initial point then
\begin{itemize}
\item[(i)] $W(x^0,y^0)\in \mathcal M$,
\item[(ii)] if $c\leq 2b$ then
$$\lim_{n\to \infty}W^n(x^0, y^0)=\left\{\begin{array}{ll}
(0,0), \ \ \mbox{if} \ \ A(x^0, y^0)=0\\[2mm]
(1,1), \ \ \mbox{if} \ \ A(x^0, y^0)>0
\end{array}\right.
$$
\item[(iii)] if $c>2b$ then
$$\lim_{n\to \infty}W^n(x^0, y^0)=\left\{\begin{array}{lll}
(0,0), \ \ \mbox{if} \ \ A(x^0,y^0)<x^*\\[2mm]
(x^*,x^*), \ \ \mbox{if} \ \ A(x^0,y^0)=x^*\\[2mm]
(1,1), \ \ \mbox{if} \ \ A(x^0,y^0)>x^*.
\end{array}\right.
$$
\end{itemize}
\end{thm}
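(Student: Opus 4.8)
The plan is to collapse the two-dimensional dynamics onto the one-dimensional map $f$ of (\ref{fx}) and then run a monotone-convergence argument on $[0,1]$.

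First I would prove (i) by the observation that makes the symmetric case tractable: when $a=\alpha$, $b=\beta$, $c=\gamma$ the two coordinate formulas in (\ref{w}) become the \emph{same} rational expression, so for every $(x,y)\in S$ one has $x'=y'=A(x,y)$. Hence $W(x^0,y^0)=\bigl(A(x^0,y^0),A(x^0,y^0)\bigr)\in\mathcal M$, which is (i). Moreover $\mathcal M$ is invariant and on $\mathcal M$ the operator $W$ acts by $x\mapsto f(x)$. Writing $t:=A(x^0,y^0)$, it follows that $W^{\,n}(x^0,y^0)=\bigl(f^{\,n-1}(t),f^{\,n-1}(t)\bigr)$ for $n\ge1$, so the whole problem reduces to determining $\lim_{n\to\infty}f^{\,n}(t)$ for $t\in[0,1]$ and reading off the two equal coordinates.

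Next I would establish the sign of $f(x)-x$, which controls everything. Clearing the common denominator $D(x):=ax^2+2bx(1-x)+c(1-x)^2>0$ gives the factorization
\[
f(x)-x=\frac{x(1-x)\,g(x)}{D(x)},\qquad g(x):=ax+(2b-c)(1-x),
\]
where $g$ is affine with $g(0)=2b-c$ and $g(1)=a>0$. Since $x(1-x)>0$ on $(0,1)$, on that interval the sign of $f(x)-x$ equals the sign of $g(x)$. I would also record the two elementary bounds used for invariance: the numerator $ax^2+2bx(1-x)=x\bigl(ax+2b(1-x)\bigr)>0$ for $x\in(0,1)$, whence $f(x)>0$; and $c(1-x)^2>0$ for $x<1$ (as $c>0$ in both cases below), whence $f(x)<1$.

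For (ii), when $c\le 2b$ we have $2b-c\ge0$, so $g(x)=(1-x)(2b-c)+ax>0$ for all $x\in(0,1]$; thus $f(x)>x$ on $(0,1)$. Together with $0<f(x)<1$ this shows that, for any $t\in(0,1)$, the orbit $f^{\,n}(t)$ is strictly increasing and bounded above by $1$, hence converges to a fixed point of $f$ lying in $(t,1]$; since $x^*\notin(0,1)$ here, the only such fixed point is $1$, giving $\lim_n f^{\,n}(t)=1$. The boundary values $t=0,1$ are fixed, so $\lim_n f^{\,n}(0)=0$. Substituting $t=A(x^0,y^0)$ yields exactly the dichotomy in (ii).

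Finally, for (iii), when $c>2b$ we have $g(0)<0<g(1)$, so the affine $g$ has a unique root in $(0,1)$, which the computation identifies as $x^*=\tfrac{c-2b}{a-2b+c}$; hence $g<0$ on $(0,x^*)$ and $g>0$ on $(x^*,1)$, i.e. $f(x)<x$ on $(0,x^*)$ and $f(x)>x$ on $(x^*,1)$. The step that needs care—and is the main (though still routine) obstacle—is checking that the two subintervals are forward invariant: for $t\in(0,x^*)$ the bounds above give $0<f(t)<t<x^*$, and for $t\in(x^*,1)$ they give $x^*<t<f(t)<1$. Granting invariance, monotone-bounded convergence forces $f^{\,n}(t)\to0$ from the left interval and $f^{\,n}(t)\to1$ from the right interval, each limit being the unique fixed point trapping the monotone orbit, while $t=x^*$ stays fixed. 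Substituting $t=A(x^0,y^0)$ produces the three cases of (iii). The only genuinely delicate points are this invariance of $(0,x^*)$ and $(x^*,1)$ and the exclusion of spurious limit points, both of which follow cleanly from the displayed sign factorization.
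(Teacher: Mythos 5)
Your proof is correct, and it reaches the conclusion by a genuinely different route from the paper. The paper's proof, after the same reduction to the invariant diagonal $\mathcal M$ and the one-dimensional map $f$, argues via the qualitative shape of the graph of $f$: it asserts that $f$ is increasing and convex when $c\le 2b$, and concave on $[0,x^*)$, convex on $[x^*,1]$ when $c>2b$, and then reads the limits off the pictures in Figures 1 and 2 (an implicit cobweb argument). You instead bypass all curvature and monotonicity claims by computing the explicit factorization
$$f(x)-x=\frac{x(1-x)\bigl[ax+(2b-c)(1-x)\bigr]}{ax^2+2bx(1-x)+c(1-x)^2},$$
which pins down the sign of $f(x)-x$ on $(0,1)$ through the affine factor $g$, identifies $x^*$ as its unique root, and then closes the argument with forward invariance of the subintervals and monotone bounded convergence to a fixed point. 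Your version is more self-contained and arguably more rigorous: it needs neither the convexity assertions (which the paper does not verify) nor the figures, only the sign of $f-\mathrm{id}$, the bounds $0<f<1$ on $(0,1)$, and continuity of $f$. One small caveat, which the paper shares: the hypothesis only gives $c=\gamma\ge 0$, so in case (ii) your parenthetical ``$c>0$ in both cases'' is not literally guaranteed; when $c=0$ one has $A(x^0,y^0)=1$ whenever $A$ is defined and the orbit lands on $(1,1)$ in one step, so the stated conclusion still holds, but it deserves a sentence rather than being folded into the $c>0$ argument.
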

\begin{proof} From conditions of theorem it follows that $x'=y'$, i.e., (i) holds.
Therefore to investigate a trajectory for operator $W$ it suffices  to consider it on the invariant set $\mathcal M$.
Simple analysis of the function (\ref{fx}) shows that it is monotone increasing and
convex when $c\leq 2b$ (see Fig. \ref{f1}), and for $c>2b$
it is concave if $x\in [0,x^*)$ and convex if  $x\in [x^*,1]$ (see Fig. \ref{f2}).
This completes the proof.
\begin{figure}[h]
\begin{center}
\includegraphics[width=8cm]{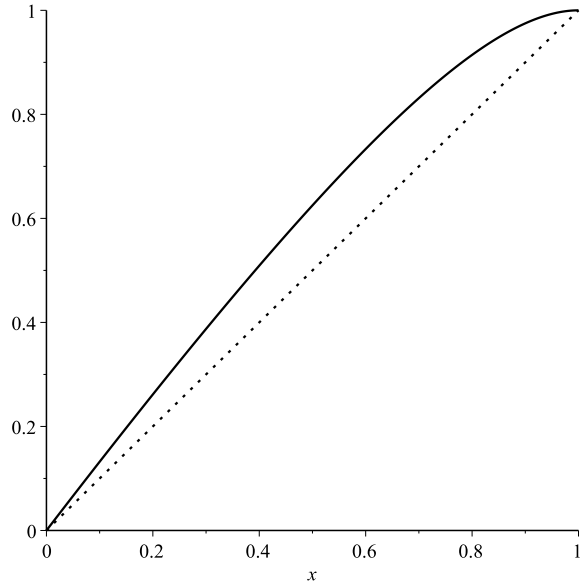}
\end{center}
\caption{The graph of $f$ for $c\leq 2b$.}
\label{f1}
\end{figure}
\begin{figure}[h]
\begin{center}
\includegraphics[width=8cm]{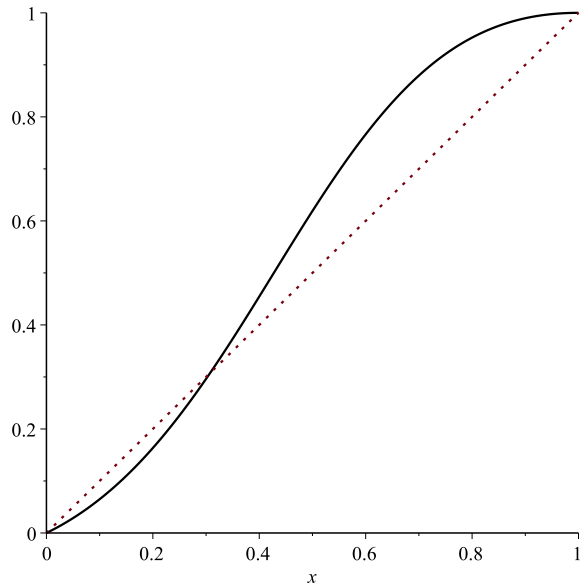}
\end{center}
\caption{The graph of $f$ for $c>2b$.}
\label{f2}
\end{figure}
\end{proof}

%\subsection{Opposing additive selection}
%Take $s,t\in [0,1]$ and following \cite{K} consider the following particular case of parameters
%\begin{equation}\label{abc}
%\begin{array}{ll}
%a=1, \ \ b=1-{s\over 2}, \ \ c=1-s;\\[2mm]
%\alpha=1-t, \ \ \beta=1-{t\over 2}, \ \ \gamma=1.
%\end{array}
%\end{equation}
%
%This case is known as opposing additive selection \cite{K}.
%In this case operator (\ref{w}) has the following form
%Consider operator  $W:S\rightarrow S$, defined by
%\begin{equation}\label{w1}
%\begin{array}{ll}
%x^\prime={xy+ (1-{s\over 2})(x(1-y)+y(1-x))\over xy+(1-{s\over 2})(x(1-y)+y(1-x))+(1-s)(1-x)(1-y)},\\[3mm]
%y^\prime={(1-t) xy+ (1-{t\over 2}) (x(1-y)+y(1-x))\over (1-t) xy+(1-{t\over 2}) (x(1-y)+y(1-x))+(1-x)(1-y)}.
%\end{array}
%\end{equation}

\subsection{The general case}
If $x=1$ or $y=1$ in (\ref{w}) then $x'=y'=1$.
Therefore for any initial point of the form $(1, y)$ and $(x, 1)$
we have
$$ W(1, y)=W(x, 1)=(1, 1).$$
The case $c\cdot \gamma =0$ also trivially gives $W^2(x, y)=(1, 1).$

Thus we should consider the case  $c\cdot \gamma \ne 0$,
  $x\ne 1$ and $y\ne 1$.

\begin{lemma}\label{q2} If $c\gamma\ne 0$ and an initial point $(x^{(0)}, y^{(0)})\in S$ is
such that $x^{(0)}\ne 1$ or $y^{(0)}\ne 1$
then $x^{(n)}\ne 1$ and $y^{(n)}\ne 1$ for any $n\geq 1$.
\end{lemma}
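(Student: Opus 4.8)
The plan is to prove the statement by induction on $n$, reducing everything to a single invariance property of the map $W$: the region of $S$ in which neither coordinate equals $1$ is forward invariant. Concretely, I would first isolate the following one-step claim: \emph{if $(x,y)\in S$ with $x\neq 1$ and $y\neq 1$, and $c\gamma\neq 0$, then $W(x,y)=(x',y')$ is well defined and satisfies $x'\neq 1$ and $y'\neq 1$.} Granting this claim, a direct induction starting from $(x^{(0)},y^{(0)})$ (which by hypothesis avoids both lines $x=1$ and $y=1$) produces $x^{(n)}\neq 1$ and $y^{(n)}\neq 1$ for every $n\geq 1$, since each application of $W$ returns a point again satisfying the hypotheses of the claim.

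To prove the one-step claim I would first check well-definedness, i.e.\ that the two denominators in (\ref{w}) are strictly positive. Writing $D$ for the denominator of $x'$, the summand $c(1-x)(1-y)$ occurs in $D$ and all summands of $D$ are nonnegative; since $c\neq 0$, $1-x>0$ and $1-y>0$, we get $c(1-x)(1-y)>0$, hence $D>0$. The same argument with $\gamma$ in place of $c$ shows the denominator $D'$ of $y'$ is positive. In particular $x',y'\in[0,1]$, because each numerator is nonnegative and bounded above by its own denominator, so the next iterate again lies in $S$.

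The heart of the matter is then the elementary identity obtained by subtracting each fraction from $1$:
\begin{equation*}
1-x'=\frac{c(1-x)(1-y)}{D},\qquad 1-y'=\frac{\gamma(1-x)(1-y)}{D'}.
\end{equation*}
Under the hypotheses $c\gamma\neq 0$, $x\neq 1$, $y\neq 1$, both right-hand sides are products of strictly positive quantities divided by a positive number, so $1-x'>0$ and $1-y'>0$; that is, $x'<1$ and $y'<1$. This establishes the one-step claim, and combined with the induction above it yields the lemma.

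The only point requiring care—rather than a genuine obstacle—is the logical reading of the hypothesis: it must be understood as ``$x^{(0)}\neq 1$ \emph{and} $y^{(0)}\neq 1$'', because if either initial coordinate equalled $1$ then the relation $W(1,y)=W(x,1)=(1,1)$ recorded just before the lemma would force $x^{(1)}=y^{(1)}=1$ and contradict the conclusion. With that reading the whole argument is precisely the invariance computation above, and the role of the assumption $c\gamma\neq 0$ is exactly to prevent the factors $c$ and $\gamma$ from annihilating $1-x'$ and $1-y'$.
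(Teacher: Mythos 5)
Your proof is correct and is essentially the paper's argument in contrapositive form: the paper assumes a first index $n_0+1$ at which a coordinate equals $1$ and derives $c(1-x^{(n_0)})(1-y^{(n_0)})=0$ or $\gamma(1-x^{(n_0)})(1-y^{(n_0)})=0$, which is exactly your one-step identity $1-x'=c(1-x)(1-y)/D$, $1-y'=\gamma(1-x)(1-y)/D'$ read backwards. Your additional remarks — that the denominators are strictly positive (so $W$ is well defined) and that the hypothesis must be read as ``$x^{(0)}\neq 1$ \emph{and} $y^{(0)}\neq 1$'' for the statement to be true — are both correct and are points the paper leaves implicit.
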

\begin{proof} Assume that there exists $n_0\geq 0$ such that
\begin{equation}\label{n0}
\begin{array}{ll}
x^{(k)}\ne 1 \ \ {\rm and} \ \ y^{(k)}\ne 1, \ \ {\rm for \, any} \ \ k\leq n_0,\\[2mm]
x^{(n_0+1)}=1 \ \ {\rm or} \ \ y^{(n_0+1)}=1
\end{array}
\end{equation}
 then by (\ref{wt}) we have
$$
x^{(n_0+1)}=1={ax^{(n_0)}y^{(n_0)}+ b(x^{(n_0)}(1-y^{(n_0)})+y^{(n_0)}(1-x^{(n_0)}))\over ax^{(n_0)}y^{(n_0)}+b(x^{(n_0)}(1-y^{(n_0)})+y^{(n_0)}(1-x^{(n_0)}))+c(1-x^{(n_0)})(1-y^{(n_0)})}
$$
or
$$y^{(n_0+1)}=1={\alpha x^{(n_0)}y^{(n_0)}+ \beta (x^{(n_0)}(1-y^{(n_0)})+
y^{(n_0)}(1-x^{(n_0)}))\over \alpha x^{(n_0)}y^{(n_0)}+\beta (x^{(n_0)}(1-y^{(n_0)})+y^{(n_0)}(1-x^{(n_0)}))+\gamma (1-x^{(n_0)})(1-y^{(n_0)})}.
$$
Consequently
$$c(1-x^{(n_0)})(1-y^{(n_0)})=0  \ \ \mbox{or} \ \ \gamma (1-x^{(n_0)})(1-y^{(n_0)})=0$$
this is impossible, because of $c\gamma\ne 0$ and (\ref{n0}).
\end{proof}

Denote
$$s={x\over 1-x}, \ \ t={y\over 1-y},$$
\begin{equation}\label{par}
A=a/c, \ \ B=b/c, \ \ C=\alpha/\gamma, \ \ D=\beta/\gamma.
\end{equation}
Then from (\ref{w}) we get
\begin{equation}\label{ww}
T:\left\{\begin{array}{ll}
s'={x'\over 1-x'}=A st+B (s+t),\\[3mm]
t'={y'\over 1-y'}=C st+D (s+t).
\end{array}
\right.
\end{equation}
Note that $A, B, C, D\geq 0$ and $T: [0,+\infty)^2\to [0,+\infty)^2$.

By the equality (\ref{wt}) and Lemma \ref{q2} we get the following relation between the
trajectory of the operator (\ref{w}) and the operator (\ref{ww}):

\begin{equation}\label{wwt}
T^{(n+1)}:\left\{\begin{array}{ll}
s^{(n+1)}={x^{(n+1)}\over 1-x^{(n+1)}}=A s^{(n)}t^{(n)}+B (s^{(n)}+t^{(n)}),\\[3mm]
t^{(n+1)}={y^{(n+1)}\over 1-y^{(n+1)}}=C s^{(n)}t^{(n)}+D (s^{(n)}+t^{(n)}).
\end{array}
\right.
\end{equation}
\begin{lemma}\label{ty} A trajectory of the operator (\ref{w}) converges if and only if the
corresponding trajectory (\ref{wwt}) of (\ref{ww}) converges.
\end{lemma}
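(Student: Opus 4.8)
The plan is to realize the substitution $s=\phi(x):=x/(1-x)$, $t=\phi(y)$ as a homeomorphism conjugating the two systems, and then to use that a homeomorphism both preserves and reflects the convergence of sequences.

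First I would record the elementary properties of $\phi$. The map $\phi\colon[0,1)\to[0,+\infty)$, $\phi(x)=x/(1-x)$, is continuous and strictly increasing (its derivative is $1/(1-x)^2>0$), and it has the continuous inverse $\phi^{-1}(s)=s/(1+s)$; hence $\phi$ is a homeomorphism of $[0,1)$ onto $[0,+\infty)$, and the product map $\Phi:=\phi\times\phi$ is a homeomorphism of $[0,1)^2$ onto $[0,+\infty)^2$.

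Next I would invoke Lemma \ref{q2} to make the change of variables legitimate along the entire orbit. In the case at hand, $c\gamma\ne 0$ with $x^{(0)}\ne 1$ or $y^{(0)}\ne 1$, the lemma gives $x^{(n)}\ne 1$ and $y^{(n)}\ne 1$ for all $n\ge 1$, so every $(x^{(n)},y^{(n)})$ with $n\ge 1$ lies in $[0,1)^2$, the domain of $\Phi$. The relation (\ref{wwt}) then states exactly that $\Phi(x^{(n)},y^{(n)})=(s^{(n)},t^{(n)})$ for all $n\ge 1$, i.e.\ $\Phi$ intertwines the $W$-orbit with the $T$-orbit. The equivalence is then immediate: if $(s^{(n)},t^{(n)})\to(\bar s,\bar t)\in[0,+\infty)^2$ then continuity of $\Phi^{-1}$ yields $(x^{(n)},y^{(n)})=\Phi^{-1}(s^{(n)},t^{(n)})\to\Phi^{-1}(\bar s,\bar t)\in[0,1)^2$, and conversely, if $(x^{(n)},y^{(n)})\to(\bar x,\bar y)$ with $\bar x<1$, $\bar y<1$, then continuity of $\Phi$ gives $(s^{(n)},t^{(n)})\to\Phi(\bar x,\bar y)$.

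The only delicate point --- and the main obstacle --- is the boundary behaviour at which $\phi$ blows up, namely a $W$-limit with $\bar x=1$ or $\bar y=1$: there the $W$-orbit converges while one of $s^{(n)},t^{(n)}$ escapes to $+\infty$, so the $T$-orbit fails to converge in $[0,+\infty)^2$. To keep the stated equivalence literally exact I would extend $\phi$ to a homeomorphism $\bar\phi\colon[0,1]\to[0,+\infty]$ of the compactifications by $\bar\phi(1)=+\infty$; then $\bar\Phi:=\bar\phi\times\bar\phi$ is a homeomorphism of the compact square $[0,1]^2$ onto $[0,+\infty]^2$, so convergence of one orbit is equivalent to convergence of the other without exception, a limit on $\{x=1\}\cup\{y=1\}$ corresponding to a limit of $(s^{(n)},t^{(n)})$ with an infinite coordinate. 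Alternatively one simply notes that the limits lying on $\{x=1\}$ or $\{y=1\}$ are the degenerate configurations already settled in the opening of this subsection, where $W(1,y)=W(x,1)=(1,1)$, and excludes them; after this exclusion $\Phi$ maps $[0,1)^2$ onto $[0,+\infty)^2$ and the equivalence holds verbatim.
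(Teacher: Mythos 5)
Your proof takes essentially the same approach as the paper's, which simply asserts that the lemma ``follows from'' the relations $s^{(n)}=x^{(n)}/(1-x^{(n)})$, $t^{(n)}=y^{(n)}/(1-y^{(n)})$ and their inverses $x^{(n)}=s^{(n)}/(1+s^{(n)})$, $y^{(n)}=t^{(n)}/(1+t^{(n)})$; you have merely made explicit the homeomorphism/continuity argument that this one-line proof leaves implicit. Your observation about the boundary case (a $W$-trajectory converging to a limit with $\bar x=1$ or $\bar y=1$, where the corresponding $T$-trajectory escapes to infinity and so does not converge in $[0,+\infty)^2$) identifies a genuine imprecision in the lemma as literally stated, and your proposed repair --- compactifying to $[0,+\infty]^2$ or excluding the degenerate configurations already settled at the start of the subsection --- is exactly what is needed.
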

\begin{proof} Follows from the relations
$$s^{(n)}={x^{(n)}\over 1-x^{(n)}}, \ \ t^{(n)}={y^{(n)}\over 1-y^{(n)}},$$
and their inverse:
\begin{equation}\label{tst}
x^{(n)}={s^{(n)}\over 1+s^{(n)}}, \ \ y^{(n)}={t^{(n)}\over 1+t^{(n)}}.
\end{equation}
\end{proof}
Fixed points of $T$ are
$$O=(0,0), \ \ P=\left({B+D-1\over (B-1)C-AD}, \, {B+D-1\over (D-1)A-BC}\right).$$
Denote
$$\mathcal P=\left\{(A,B,C,D)\in R^4_+: P>0, {\rm i.e.}, {B+D-1\over (B-1)C-AD}>0, \, {B+D-1\over (D-1)A-BC}>0\right\}.$$
It is easy to see that $\mathcal P$ is not empty set. Indeed, if $B+D<1$ then $P>0$:
$${B+D-1\over C(B-1)-AD}>0, \, {B+D-1\over A(D-1)-BC}>0.$$

Thus we have
\begin{lemma} The set of fixed points of the operator (\ref{ww}) is
$${\rm Fix}(T)=\left\{\begin{array}{ll}
\{O\}, \ \ \mbox{if} \ \  (A,B,C,D)\notin \mathcal P\\[2mm]
\{O, P\}, \ \ \mbox{if} \ \  (A,B,C,D)\in \mathcal P
\end{array}
\right.$$
\end{lemma}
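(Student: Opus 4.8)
The proof naturally splits into two independent tasks: solving the fixed-point system of $T$ exactly, and then matching the positivity of the nonzero solution with membership in $\mathcal P$. That $O=(0,0)$ is a fixed point is immediate. For everything else I would first rewrite the fixed-point equations $s=Ast+B(s+t)$ and $t=Cst+D(s+t)$ in the separated form
$$s(1-B)=t(As+B), \qquad t(1-D)=s(Ct+D),$$
which I will refer to as (I) and (II). The standing hypotheses $a+b\ne 0$, $\alpha+\beta\ne 0$ together with $c\gamma\ne 0$ yield $A+B>0$ and $C+D>0$; in particular $As+B>0$ whenever $s>0$ (if $A=0$ then $B>0$), which legitimizes the divisions below.

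The plan is to locate first the interior fixed points, those with $s,t>0$. From (I) I solve $t=\dfrac{s(1-B)}{As+B}$, and writing (II) as $t(1-D-Cs)=Ds$ gives $t=\dfrac{Ds}{1-D-Cs}$. Equating the two expressions and cancelling the common factor $s>0$ leaves $\dfrac{1-B}{As+B}=\dfrac{D}{1-D-Cs}$. The crux of the computation—and the point I expect to be the most satisfying—is that cross-multiplying produces an equation that is \emph{linear} in $s$: the would-be quadratic term cancels, and since the constant part collapses via $(1-B)(1-D)-DB=1-B-D$, one is left with $1-B-D=s\,[AD+C-BC]$. Solving gives the unique value $s=\dfrac{B+D-1}{(B-1)C-AD}=P_1$, and the symmetry $(s,A,B)\leftrightarrow(t,C,D)$ of the system forces $t=P_2$. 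Thus $P$ is the only candidate interior fixed point, and a direct substitution back into (I)--(II) confirms that $P$ indeed solves the system.

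It then remains to compare with $\mathcal P$. Since $\mathcal P$ is defined precisely by $P_1>0$ and $P_2>0$, the interior fixed point $P$ lies in $[0,+\infty)^2$ exactly when $(A,B,C,D)\in\mathcal P$. When $(A,B,C,D)\notin\mathcal P$, the point $P$ is either undefined (a vanishing denominator) or has a non-positive coordinate, and in either event the displayed linear equation shows that no admissible interior fixed point exists. Combining the two regimes gives ${\rm Fix}(T)=\{O,P\}$ on $\mathcal P$ and ${\rm Fix}(T)=\{O\}$ off $\mathcal P$, which is the assertion.

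The step demanding the most care is excluding spurious fixed points on the boundary of the quadrant, i.e. those with $st=0$ but $(s,t)\ne(0,0)$. Adding (I) and (II) gives $(s+t)(1-B-D)=(A+C)st$, so a boundary fixed point with, say, $t=0<s$ forces $B=1$ and $D=0$ (and symmetrically $B=0$, $D=1$ when $s=0<t$); these are exactly the degenerate parameter values at which the denominators of $P$, or the numerator $B+D-1$, vanish. I would dispose of them by observing that they fall outside the regime where $P$ is a well-defined positive point, so under the nondegeneracy implicit in the statement (namely $As+B>0$ and the relevant denominators nonzero) no further fixed points appear; the very cancellation that renders the $s$-equation linear is what guarantees uniqueness of the interior solution.
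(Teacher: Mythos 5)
Your computation is correct and is essentially the argument the paper leaves unwritten: the paper simply asserts that the fixed points of $T$ are $O$ and $P$ and then records the lemma, with no derivation at all. Your route --- separating the system into $s(1-B)=t(As+B)$ and $t(1-D)=s(Ct+D)$, eliminating $t$, and observing that the resulting equation in $s$ is linear with constant term $(1-B)(1-D)-BD=1-B-D$ --- is the natural way to obtain $P_1=\frac{B+D-1}{(B-1)C-AD}$, and the symmetry giving $P_2$ is right. One small unjustified step is the division by $1-D-Cs$: if that quantity vanishes at a fixed point with $s,t>0$, then $Ds=0$ forces $D=0$ and $s=1/C$, which one checks is again exactly the point $P$ (with $D=0$ the formula gives $P_1=1/C$, $P_2=(1-B)/(A+BC)$), so nothing new appears; this should be said rather than skipped.

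The genuine soft spot is your last paragraph. Your own identity $(s+t)(1-B-D)=(A+C)st$, combined with the individual equations, shows that a boundary fixed point with $t=0<s$ forces $B=1$, $D=0$ --- but at those parameter values \emph{every} point of the ray $\{(s,0):s\ge 0\}$ is fixed ($s'=Bs=s$, $t'=Ds=0$), and these parameters are not excluded by any hypothesis in the paper ($b=c>0$, $\beta=0$, $\alpha>0$ is admissible, and $(A,1,C,0)\notin\mathcal P$ since $P$ is then $0/0$). So the lemma as stated actually fails there, and your appeal to ``nondegeneracy implicit in the statement'' invokes a hypothesis that does not exist. This is a defect of the lemma rather than of your argument --- your boundary analysis is precisely what exposes it --- but a complete proof must either explicitly exclude the cases $(B,D)=(1,0)$ and $(0,1)$ or record them as exceptions, not wave them away.
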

To define the type of the fixed points consider the Jacobian of the operator $T$:
\begin{equation}\label{j}
J(s,t)=\left(\begin{array}{cc}
At+B & As+B\\[3mm]
Ct+D & Cs+D\\
\end{array}
\right)
\end{equation}

For the fixed point $O$ we have that $J(O)$ has two eigenvalues
$0$ and $B+D$. Therefore this point is attractor iff $B+D<1$;
non-hyperbolic iff $B+D=1$ and saddle iff $B+D>1$.

For the fixed point $P$ one can explicitly calculate eigenvalues $\lambda_1$, $\lambda_2$ of $J(P)$, but they have very long formulas.
Therefore we give these eigenvalues for concretely chosen parameters:\\
\begin{center}
\begin{tabular}{|l|l|l|l|l|l|l|}
\hline
A& B& C& D&$\lambda_1$&$\lambda_2$&type\\[2mm]
\hline
0.3&0.1&0.2&0.1&1.795344459& -0.5853490213&saddle\\[2mm]
 \hline
0.9&0.5&0.2&0.8&0.8055778837& -0.5430343293&attractor\\[2mm]
  \hline
9&5&2&0.8&12.72976779& 1.409215262&repeller\\[2mm]
\hline
9&0.5&2&0.5&0& 1&non-hyperbolic\\[2mm]
 \hline
\end{tabular}
\end{center}

By this table we see that the fixed point $P$ may have any possible types.

 From the known theorem about stable and unstable manifolds (see \cite{De} and  \cite{G})
 we get the following result
 \begin{pro}\label{pt} If parameters of the operator (\ref{ww}) are such that $O$ (resp. $P$) is
 \begin{itemize}
 \item[-] {\rm attractor} then there exists a neighborhood $U\subset [0,+\infty)^2$ of $O$ (resp. of $P$) such
 that
 $\lim_{n\to \infty}T^n(x)=O$ \ \ (resp. $=P$) \ \ \mbox{for all} \ \ $x\in U.$
 \item[-] {\rm saddle} then there exists an invariant\footnote{a curve $\gamma$ is invariant with respect to $T$ if $T(\gamma)\subset \gamma$.} curve $\xi$ (resp. $\eta$)
 through $O$ (resp. $P$) in the set $[0,+\infty)^2$ such that for any initial
 point $x\in \xi$ (resp. $\in \eta$) one has
 $\lim_{n\to\infty} T^n(x)=O$ \ \ (resp. $=P$).
 \item[-] if $P$ is {\rm repeller} then there exists an invariant curve $\zeta$
 through $P$ in the set $[0,+\infty)^2$ and there is  a neighborhood $\mathcal N(P)$ of $P$ such that
 for any initial point $v\in \zeta\cap \mathcal N(P)$, there exists $k=k(v)\in \mathbb N$ that $T^k(v)\notin \mathcal N(P)$.
 \end{itemize}
   \end{pro}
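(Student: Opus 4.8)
The plan is to reduce all three assertions to the classical Hadamard--Perron stable/unstable manifold theorem for hyperbolic fixed points of a smooth map, applied to the polynomial map $T$ of (\ref{ww}) whose Jacobian $J(s,t)$ is given in (\ref{j}). Since the eigenvalue structure at $O$ and $P$ has already been determined above---$J(O)$ has eigenvalues $0$ and $B+D$, while $J(P)$ has eigenvalues $\lambda_1,\lambda_2$---the first step is merely to record which eigenvalues lie strictly inside, on, or outside the unit circle in each regime, and to note that in the attractor, saddle, and repeller cases the fixed point is hyperbolic (no eigenvalue of modulus $1$), so the theorems of \cite{De}, \cite{G} apply.

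For the attractor case I would invoke the standard local-stability corollary of linearization: when both eigenvalues satisfy $|\lambda_i|<1$ the spectral radius of the Jacobian is $<1$, so (via a Lyapunov norm adapted to $J$, or via the Grobman--Hartman theorem) there is a neighborhood $U\subset[0,+\infty)^2$ on which $T$ contracts in a suitable metric; iterating yields $\lim_{n\to\infty}T^n(x)=O$ (resp. $=P$) for every $x\in U$. For the saddle case I would apply the stable manifold theorem directly: the eigenvalue of modulus $<1$ produces a one-dimensional local stable manifold $W^s_{\mathrm{loc}}$, tangent at the fixed point to the corresponding eigenvector and invariant under $T$; this is the curve $\xi$ (resp. $\eta$), and by construction every point of it has forward orbit converging to $O$ (resp. $P$).

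For the repeller case, where both eigenvalues of $J(P)$ have modulus $>1$, I would pass to the local inverse $T^{-1}$, well defined since $\det J(P)=\lambda_1\lambda_2\neq 0$, for which $P$ is an attractor; the unstable manifold theorem then supplies an invariant curve $\zeta$ through $P$ (for instance the strong unstable manifold tangent to the dominant eigenvector when $|\lambda_1|\neq|\lambda_2|$, as in the tabulated example $(A,B,C,D)=(9,5,2,0.8)$). Because $T$ expands near $P$, a lower bound of the form $\|T(v)-P\|\ge \rho\,\|v-P\|$ with $\rho>1$ on a small ball $\mathcal N(P)$ forces the forward distance to $P$ to grow geometrically, so every $v\in\zeta\cap\mathcal N(P)$ with $v\neq P$ admits a first escape index $k=k(v)\in\mathbb N$ with $T^k(v)\notin\mathcal N(P)$, as required.

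The main obstacle I expect is not the local dynamics but the geometry of the domain: the manifolds furnished by Hadamard--Perron are a priori defined only in a full neighborhood in $\R^2$, so I must verify that the relevant arcs of $\xi,\eta,\zeta$ and the neighborhoods $U,\mathcal N(P)$ can be chosen inside the physical region $[0,+\infty)^2$ on which $T$ acts; this is exactly where the positivity of $A,B,C,D$ and of the fixed-point coordinates of $P$ is used. A secondary technical point is the degenerate eigenvalue $0$ of $J(O)$: it renders $T$ non-invertible at $O$, so only the stable-manifold (attractor and saddle) arguments apply there---consistent with $O$ never being a repeller---and it forces me to use a version of the stable manifold theorem that permits non-invertibility rather than the diffeomorphism form, the value $0$ being harmless since it lies strictly inside the unit circle.
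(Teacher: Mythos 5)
Your proposal follows essentially the same route as the paper, which derives the proposition simply by invoking the classical stable/unstable manifold theorems of \cite{De} and \cite{G} without further argument. You in fact supply more detail than the paper does, and the two caveats you flag --- confining the invariant curves and the neighborhoods to $[0,+\infty)^2$, and the non-invertibility of $T$ at $O$ caused by the zero eigenvalue of $J(O)$ --- are genuine subtleties that the paper passes over in silence.
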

The curves $\xi, \eta$ are known as stable manifolds and $\zeta$ is an unstable manifold
(see \cite{Ga} for notations of stable manifold, stable eigenspace etc.)

The following theorem is corollary of the above proved results

\begin{thm}\label{tt} If parameters of the operator (\ref{w}) are such that $z_0$ (resp. $z_2$) is
 \begin{itemize}
 \item[-] {\rm attractor} then there exists a neighborhood $\mathcal U\subset S$ of $z_0$ (resp. of $z_2$) such
 that
 $\lim_{n\to \infty}W^n(x)=z_0$ \ \ (resp. $=z_2$) \ \ \mbox{for all} \ \ $x\in \mathcal U.$
 \item[-] {\rm saddle} then there exists an invariant curve $\tilde\xi$ (resp. $\tilde\eta$)
 through $z_0$ (resp. $z_2$) in the set $S$ such that for any initial
 point $x\in \tilde\xi$ (resp. $\in \tilde\eta$) one has
 $\lim_{n\to\infty} W^n(x)=z_0$ \ \ (resp. $=z_2$).
 \item[-] if $z_2$ is {\rm repeller} then there exists an invariant curve $\tilde\zeta$
 through $z_2$ in the set $S$ and there is  a neighborhood $\mathbf N(z_2)$ of $z_2$ such that
 for any initial point $v\in \zeta\cap \mathbf N(z_2)$, there exists $k=k(v)\in \mathbb N$ that $W^k(v)\notin \mathbf N(z_2)$.
 \end{itemize}
   \end{thm}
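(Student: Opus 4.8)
The plan is to deduce the whole statement from the topological conjugacy between $W$ and $T$ already encoded in (\ref{ww})--(\ref{wwt}). Introduce the map
$$\Phi(s,t)=\left(\frac{s}{1+s},\,\frac{t}{1+t}\right),\qquad \Psi(x,y)=\left(\frac{x}{1-x},\,\frac{y}{1-y}\right),$$
so that, by (\ref{tst}), $\Phi$ is a diffeomorphism of $[0,+\infty)^2$ onto $[0,1)^2$ with inverse $\Psi$ (smooth on the interior, a homeomorphism up to the relevant boundary). By the derivation of (\ref{ww}) together with Lemma \ref{q2}, the set $\{(x,y)\in S: x\ne 1,\ y\ne 1\}$ is forward invariant under $W$, and on it one has the conjugacy relation
$$\Psi\circ W=T\circ\Psi,\qquad\text{equivalently}\qquad W=\Phi\circ T\circ\Psi,$$
whence $W^n=\Phi\circ T^n\circ\Psi$ for all $n$. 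First I would record the correspondence of fixed points: $\Phi(O)=z_0$ is immediate, while $\Phi(P)=z_2$ is a direct substitution of (\ref{par}) into the coordinates of $P$ and $z_2$.

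Next I would transfer the classification of the fixed points. Differentiating $W=\Phi\circ T\circ\Psi$ at a fixed point $z=\Phi(p)$, with $p\in\{O,P\}$, and using $\Psi=\Phi^{-1}$, gives
$$DW(z)=D\Phi(p)\,DT(p)\,\bigl(D\Phi(p)\bigr)^{-1},$$
so $DW(z)$ and $DT(p)=J(p)$ are similar and share the same eigenvalues (note $D\Phi$ is invertible at $O$ and at the interior point $P$). Hence $z_0$ (resp. $z_2$) is an attractor, saddle, or repeller for $W$ precisely when $O$ (resp. $P$) is so for $T$. Thus each hypothesis of the theorem is exactly the hypothesis under which Proposition \ref{pt} applies to $T$.

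It then remains to push the conclusions of Proposition \ref{pt} forward by $\Phi$. In the attractor case, if $U$ is the neighborhood of $O$ (resp. $P$) furnished there, set $\mathcal U=\Phi(U)$; since $W^n=\Phi\circ T^n\circ\Psi$ and $\Phi$ is continuous, $\lim_n W^n(x)=\Phi\bigl(\lim_n T^n(\Psi(x))\bigr)=\Phi(O)=z_0$ (resp. $z_2$) for every $x\in\mathcal U$. In the saddle case, set $\tilde\xi=\Phi(\xi)$ and $\tilde\eta=\Phi(\eta)$; invariance is preserved because $W(\Phi(\xi))=\Phi(T(\xi))\subset\Phi(\xi)$, and convergence transports exactly as above. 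In the repeller case, set $\tilde\zeta=\Phi(\zeta)$ and $\mathbf N(z_2)=\Phi(\mathcal N(P))$; since $\Phi$ is a homeomorphism, a point $v\in\tilde\zeta\cap\mathbf N(z_2)$ corresponds to $\Psi(v)\in\zeta\cap\mathcal N(P)$, and the escape index $k=k(\Psi(v))$ given by Proposition \ref{pt} satisfies $W^k(v)=\Phi(T^k(\Psi(v)))\notin\mathbf N(z_2)$.

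The main point to handle with care is the boundary: $\Phi$ is a homeomorphism only onto $[0,1)^2$, not onto all of $S$, so I must check that the neighborhoods, invariant curves, and trajectories produced for $z_0$ and $z_2$ genuinely stay away from the edge $\{x=1\}\cup\{y=1\}$. This is guaranteed because $z_0$ and $z_2$ lie in $[0,1)^2$ and, by Lemma \ref{q2}, the relevant orbits never reach that edge, so the neighborhoods can be taken small enough to lie in $[0,1)^2$ and to be open in $S$. The remainder of the argument is the routine bookkeeping of transporting the structures of Proposition \ref{pt} through the conjugacy $\Phi$.
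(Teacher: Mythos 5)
Your proposal is correct and follows essentially the same route as the paper: both rest on the conjugacy between $W$ and $T$ induced by the substitution $s=x/(1-x)$, $t=y/(1-y)$, the correspondence of fixed points $O\leftrightarrow z_0$, $P\leftrightarrow z_2$, and the transport of Proposition \ref{pt} back to $S$ via (\ref{tst}). Your write-up merely makes explicit what the paper leaves terse (the similarity of the Jacobians and the care at the boundary $\{x=1\}\cup\{y=1\}$), which is a welcome but not substantively different elaboration.
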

 \begin{proof} Formula (\ref{par}) gives relation between parameters of operator (\ref{ww}) and
(\ref{w}). Then formula (\ref{5}) shows that $O$ corresponds to $z_0=(0,0)$ and $P$ corresponds to $z_2$.
Consequently, by Lemma \ref{ty} it follows that $z_0$  is attractor iff $B+D={b\over c}+{\beta\over \gamma}<1$;
non-hyperbolic iff $B+D=1$ and saddle iff $B+D>1$. The fixed point $z_2$ (as $P$) may have any type
(attractor, saddle, repeller, non-hyperbolic).
Therefore by Proposition \ref{pt} and formulas (\ref{tst}) one completes the proof.
 \end{proof}

\section*{ Acknowledgements}

UAR thanks the universit\'e Paris Est Cr\'eteil and a program of LabEx Bezout (ANR-10-LABX-58) for supporting his visit to the University.

\end{document}